\def\forkindep{\mathrel{\raise0.2ex\hbox{\ooalign{\hidewidth$\vert$\hidewidth\cr\raise-0.9ex\hbox{$\smile$}}}}}
\def\forkindep{\mathrel{\raise0.2ex\hbox{\ooalign{\hidewidth$\vert$\hidewidth\cr\raise-0.9ex\hbox{$\smile$}}}}}
\theoremstyle{plain}
\newtheorem{theorem}{Theorem}[section]
\newtheorem{proposition}[theorem]{Proposition}
\theoremstyle{definition} 
\newtheorem{definition}[theorem]{Definition}
\newtheorem{example}[theorem]{Example}
\newtheorem{remark}[theorem]{Remark}
\theoremstyle{remark}
\renewcommand{\phi}{\varphi}
\newcommand{\initial}\lessdot
\def\?{?\vadjust

{\vbox to 0pt{\vskip-7pt\hbox to 1.1\hsize{\hfill\huge ?!}}}}
 \def\nfork{\setbox0\hbox{$\bigcup$}%
 \setbox1=\hbox to \wd0{\hfil\vrule width 0.7pt depth 2pt height 7.5pt\hfil}%
 \wd1=0cm\relax\box1\box0}
\renewcommand{\epsilon}{\varepsilon}
\newcommand{\be}{\begin{enumerate}}
\newcommand{\ee}{\end{enumerate}}
\newcommand{\bd}{\begin{definition}}
\newcommand{\ed}{\end{definition}}
\begin{document}

\title{A Study of Abstract Elementary Classes in the context of Graphs}

\author{Navaneetha Madaparambu Rajan}
\email{navaneet.madapara@msmail.ariel.ac.il}
\address{Department of Mathematics\\ Ariel University \\ Ariel, Israel}

\maketitle

\begin{abstract}
 In the framework of graphs, we study abstract elementary classes (AECs). In this work we analyze several properties of $Forb(G)$ and versions of $Forb-Con(G)$ in the context of aecs and we present some examples of classes of graphs which contradicts amalgamation property.

\end{abstract} 

\smallskip
\noindent \textbf{Keywords}: Abstract Elementary Classes, Forb(G), Forb-con(G), Graphs
\\
{\bf MSC2020}: 03C48, 03C45, 03C50, 03C52, 05C63

\tableofcontents

\section{Introduction}
Modern model theory began with Morley’s categoricity theorem\cite{article}: A first order theory is categorical in one uncountable cardinal $\kappa$ (has a unique model of that cardinality) if and only if it is categorical in all uncountable cardinals. This result triggered the change in emphasis from the study of logic to the study of theories. Keisler generalized Morley's theorem to this framework in addition that the model in the categoricity cardinal is sequentially homogeneous\cite{Keisler1971ModelTF}. Shelah later claimed that Keisler's assumption does not follow from categoricity by using an example in \cite{Marcus1972AMP}. Lessmann\cite{lessmann1997ranks} established that the categoricity in some uncountable cardinal implies categoricity in all uncountable cardinals, provided first-order theory $T$ is countable. When $T$ is uncountable, eventual categoricity conjecture for homogeneous model theory is implicit in Shelah's viewpoint and Hyttinen and Kesala\cite{hyttinen_kesälä_2011} proved more precisely that categoricity in some $\lambda > |T|$ with $\lambda \neq \aleph_{\omega}(|T|)$ implies categoricity in all $\lambda' \geq min(\lambda, h(|T|))$.
\par
Shelah splits all first order theories into $5$ classes. Many interesting algebraic structures fall into the three classes ($\omega$-stable, superstable, strictly stable) whose models admit a deep structural analysis\cite{shelah2009classification}. The model theory of the class of models of a sentence is one of a number of ‘non-elementary’ logic\cite{shelah2009classification}.
\par
The book \cite{shelah2009classification}, on elementary classes, i.e., classes of first order theories,
presents properties of theories, which are so called ‘dividing lines’ and investigates them. When such a property is satisfied, the theory is low, i.e.,
Structure theorems are verifiable. But when such a
property is not satisfied, we have non-structure, namely, there is a witness that the theory is complicated, and there are no structure theorems. This
witness can be the existence of many models in the same power.
\par
If we think of the study of first order theory $T$ as the study of the model class $\{M: M \models T\}$, then, at that point, the notion of abstract elementary classes is a generalization of that of first order theories. There
are notable results on first order theories, that are off-base or exceptionally hard to demonstrate with regards to AEC's. The failure of the Compactness Theorem is the primary cause.
\par
An abstract elementary class (AEC) $K$ is a collection of models and a notion of ‘strong submodel’ $\prec$ which satisfies general conditions similar to those satisfied by the class of models of a first order theory with $\prec$ as elementary submodel. In particular, the class is closed under unions of $\prec$-chains. A Lowenheim-Skolem number is assigned to each AEC: a cardinal $\kappa$ such that each $M\in K$ has a strong submodel of cardinality $\kappa$.
\par
Shelah introduced AECs\cite{inbook} to provide a uniform framework in which to generalize first-order classification theory. Shelah's categoricity conjecture\cite{shelah2009classification} states that : For every AEC $K$ there should be a cardinal $\mu$ depending only on $LS(K)$ such that if $K$ is categorical in some $\lambda \geq \mu$ (i.e. K has exactly one (up to isomorphism) model of size $\lambda$), then $K$ is categorical in $\theta$ for all $\theta \geq \mu$. In other words, for every AEC $K$, there exists a cardinal $\lambda_0$ and there exists $l<2$ such that for every $\lambda \geq \lambda_0$, $I(\lambda,K) = 1$ if and only if $l = 0$. Similarly, we have Shelah's main gap conjecture\cite{inbook} which states : for every AEC $K$, there exists a cardinal $\lambda_0$ and there exists $l<2$ such that for every $\lambda \geq \lambda_0$, $I(\lambda,K) = 2^\lambda$ if and only if $l = 1$. Los Conjecture 1954\cite{o1954OnTC} and Morley 1965\cite{article} states that : Let $T$ be a first order theory. If there exists $\lambda > |T|+\aleph_0$ such that $I(\lambda, T)=1$, then $I(\mu, T)=1$ holds for every $\mu > |T|+\aleph_0$.
\par
We generally consider the class of all structures in a fixed vocabulary where the structure is a set with defined interpretations for the relation, function and constant symbols of the vocabulary.
\par
In the section 2, we provide sufficient background on abstract elementary classes, results related to them, and an overview of AECs of graphs. Various examples of classes of graphs that contradicts the amalgamation property are given in section 3. In section 4 and 5, we examine the properties of $Forb(G)$ and variants of $Forb-con(G)$. 

\section{Preliminaries}
We notate integers or natural numbers by alphabets $n,m$ ordinal numbers by $\alpha, \beta,\gamma, i, j$ limit ordinal number by $\delta$ and cardinal numbers by $\kappa, \lambda, \mu$.

\begin{definition}(Abstract Elementary Class)\label{def of Aec}\cite{jarden2013non}
Let $K$ be any class of models for a fixed vocabulary and let $\preceq$ be a 2-place relation on $K$. The pair $(K,\preceq)$ is an AEC if the following axioms are satisfied:
\begin{enumerate}
\item Closed under isomorphisms: If $M_1\in K$, $M_0\preceq M_1$ and $f:M_1\longrightarrow N_1$ is an isomorphism, then $N_1\in K$ and $f[M_0]\preceq f[M_1]=N_1$.
\item $\preceq$ is a partial order on $K$ and it is included in the inclusion relation.
\item Tarski-Vaught Axioms: 
    \begin{enumerate}

        \item If $\langle M_\alpha : \alpha < \delta \rangle$ is a $\preceq$-increasing continuous sequence, then $M_0 \preceq \bigcup \{ M_\alpha : \alpha < \delta\} \in K$.
        \item (Smoothness)If $\langle M_\alpha : \alpha < \delta \rangle$ is a $\preceq$-increasing continuous sequence, and for every $\alpha<\delta$, $M_\alpha \preceq N$, then $\bigcup \{M_\alpha : \alpha < \delta \} \preceq N$.\end{enumerate}
\item Coherence: If $M_0\subseteq M_1\subseteq M_2$ and $M_0\preceq M_2 \wedge M_1\preceq M_2$, then $M_0\preceq M_1$.
 \item LST condition: There exists a Lowenheim Skolem Tarski number, $LST(K,\preceq)$, which is the first cardinal $\lambda$, such that for every model $N\in K$ and a subset $A$ of it, there is a model $M\in K$ such that $A\subseteq M \preceq N$ and $||M|| \leq \lambda + |A|$.
\end{enumerate}
\end{definition}

\begin{definition}(Amalgamation Property-AP)\cite{jarden2013non}
    Let $n<3$, for every $M_0, M_1, M_2 \in K_\lambda$ with $M_0\preceq M_n$, there are $f_1, f_2, M_3$ such that $f_n : M_n\longrightarrow M_3$ is an embedding over $M_0$, i.e., the diagram below commutes. Then, we say that $(K_\lambda,\preceq \restriction{K_\lambda})$ satisfies amalgamation property.
    \[
\xymatrix{
M_1\ar[r]^{f_1} &M_3\\
M_0\ar[u]^{id} \ar[r]^{id} &M_2\ar[u]^{f_2}
}
\]
    
We say that $(f_1, f_2, M_3)$ is an amalgamation of $M_1$ and $M_2$ over $M_0$ or $M_3$ is an amalgam of $M_1$ and $M_2$ over $M_0$.
\par
And,$(f_1,f_2, M_3)$ is a disjoint amalgamation of $M_1$ and $M_2$ over $M_0$ when the images of the embedding intersects and their intersection is $M_0$. 
\end{definition}

\begin{definition}(Joint Embedding Property-JEP)\cite{jarden2013non}
If $M_1, M_2 \in K_\lambda$, then there are $f_1, f_2$ and $M_3$ such that for $n = 1, 2$ $f_n : M_n \longrightarrow M_3$ is an embedding and $M_3 \in K_\lambda$. Then, we say that $K_\lambda$ satisfies joint embedding property.
\end{definition}

\bd
A \emph{graph} is a model $G=(V,E)$ of the following theory:
\begin{align*}
&\forall x,y [E(x,y) \implies x \neq y] \\
&\forall x,y [E(x,y) \implies E(y,x)]
\end{align*}
We define a binary relation $\preceq$ on graphs as follows:
$G_1 \preceq G_2$ holds when: $G_1 \subseteq G_2$ ($G_1$ is a substructure of $G_2$) and for each $x \in G_2 \setminus G_1$ there is an element $y \in G_1$ such that $x,y$ are in the same connected component of $G_2$ and if it holds for two elements $y,z \in G_1$ then they are of the same connected component in $G_1$.
\ed

\begin{proposition}
Let $K^G$ be the class of graphs. The pair $(K^G,\preceq)$ forms an AEC.
\end{proposition}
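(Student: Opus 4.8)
The plan is to verify, one by one, the five axioms in the definition of an AEC, after first recording a reformulation of $\preceq$ that makes the connectivity bookkeeping manageable. The key observation I would isolate as a preliminary lemma is that, for induced subgraphs, $G_1 \preceq G_2$ is equivalent to the conjunction of: (i) every connected component of $G_2$ meets $G_1$; and (ii) for all $u,v \in G_1$, if $u$ and $v$ lie in the same component of $G_2$ then they already lie in the same component of $G_1$. Condition (i) is exactly the requirement that no vertex of $G_2 \setminus G_1$ can sit in a component disjoint from $G_1$, and (ii) is the reading of the clause ``if it holds for two elements $y,z \in G_1$ then they are of the same connected component in $G_1$''. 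Equivalently, the components of $G_1$ are precisely the (nonempty) traces on $G_1$ of the components of $G_2$; the converse of (ii) is automatic since a path inside $G_1$ is a path inside $G_2$. I would use this trace description throughout.

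With the lemma in hand, the first four axioms are short. Closure under isomorphism is immediate, since a graph isomorphism preserves edges, hence components, hence the trace description. For the partial-order axiom, reflexivity and antisymmetry are clear (antisymmetry reduces to equality of vertex sets together with both graphs being induced), and $\preceq$ is contained in $\subseteq$ by definition; transitivity is the one point needing the lemma. Given $G_1 \preceq G_2 \preceq G_3$, a component $C$ of $G_3$ meets $G_2$ in a single $G_2$-component, which in turn meets $G_1$, giving (i) for $G_1 \preceq G_3$; and if $u,v \in G_1$ are $G_3$-connected they are $G_2$-connected, hence $G_1$-connected, giving (ii). Coherence is handled identically: assuming $M_0 \subseteq M_1 \subseteq M_2$ with $M_0 \preceq M_2$ and $M_1 \preceq M_2$, each component of $M_1$ is the trace of an $M_2$-component, which meets $M_0$, and any two $M_0$-vertices that are $M_1$-connected are $M_2$-connected, hence $M_0$-connected; this yields $M_0 \preceq M_1$.

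For the chain axioms I would use that the two graph axioms are universal, so the union $M_\delta = \bigcup_{\alpha<\delta} M_\alpha$ of a $\preceq$-increasing continuous chain is again a graph, hence lies in $K^G$. The central fact is that connectivity is witnessed by \emph{finite} paths, so any path in $M_\delta$ already lies in some $M_\alpha$. Hence, for each $\beta$, I can check $M_\beta \preceq M_\delta$ through the lemma: given a vertex $x$ of an $M_\delta$-component, pick $\alpha \ge \beta$ with $x \in M_\alpha$; since $M_\beta \preceq M_\alpha$ the $M_\alpha$-component of $x$ meets $M_\beta$, and the connecting path persists in $M_\delta$, giving (i); and two $M_\beta$-vertices that are $M_\delta$-connected are joined by a finite path living in some $M_\alpha \succeq M_\beta$, hence are $M_\beta$-connected, giving (ii). The same localization proves smoothness: if $M_\alpha \preceq N$ for all $\alpha$, then every $N$-component meets $M_0 \subseteq M_\delta$, and two $M_\delta$-vertices that are $N$-connected lie in a common $M_\alpha$, where $M_\alpha \preceq N$ forces them into one $M_\alpha$-component and hence one $M_\delta$-component.

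I expect the Lowenheim-Skolem-Tarski axiom to be the main obstacle, and it is where the definition of $\preceq$ must be scrutinized. Given $N$ and $A \subseteq N$, the natural attempt is to build $M$ by adjoining to $A$ a finite connecting path for every pair of $A$-vertices lying in the same component of $N$ (securing condition (ii)) together with one representative vertex from each component of $N$ (securing condition (i)), then closing under the countably many choices involved. The path part and the closure contribute only $|A| + \aleph_0$, so the delicate point is condition (i): since $\preceq$ forces $M$ to meet \emph{every} component of $N$, the size of $M$ is bounded below by the number of components of $N$. Thus the clean bound $\|M\| \le \aleph_0 + |A|$ cannot hold uniformly for all graphs; it is available precisely when the ambient graphs carry at most countably many components (in particular for connected graphs, where $\preceq$ collapses to the induced-subgraph relation). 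Pinning down the version of $K^G$ that makes condition (i) affordable within the prescribed size bound is therefore the crux, and it is the step I would treat with the most care.
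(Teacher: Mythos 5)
The paper's own proof consists of the single sentence ``Clearly $(K^G,\preceq)$ satisfies all axioms of an AEC with $LST(K^G)=\aleph_0$,'' so there is no argument to compare yours against; the real question is whether your verification is sound. Your treatment of axioms (1)--(4) is correct and complete: the trace lemma (the components of $G_1$ are exactly the nonempty traces on $G_1$ of the components of $G_2$) is the right reformulation, and the observation that connectivity is witnessed by finite paths is precisely what makes the two Tarski--Vaught clauses and coherence go through.

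The gap is in the LST axiom, and you have correctly located it --- but you stop short of drawing the conclusion your own analysis forces. Under the literal reading of the definition, the clause ``for each $x \in G_2 \setminus G_1$ there is $y \in G_1$ in the same component of $G_2$'' makes any $M \preceq N$ meet every connected component of $N$. Taking $N$ to be the edgeless graph on $\lambda^+$ vertices (so every vertex is its own component) and $A$ a single vertex, the only $M$ with $A \subseteq M \preceq N$ is $N$ itself; since $\lambda$ was arbitrary, no LST number exists and the proposition, as literally stated, fails --- in particular the paper's claim $LST(K^G)=\aleph_0$ cannot hold, and the same clause would also defeat the later proof of JEP by plain disjoint union. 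The statement is salvaged only by reading $\preceq$ as imposing just your condition (ii) (no two components of $M$ may merge in $N$), i.e., dropping condition (i); with that reading your path-adjunction construction, iterated $\omega$ times, gives $\|M\| \le |A|+\aleph_0$ and completes the proof. (One minor slip: for connected $N$ the relation does not collapse to ``induced subgraph'' --- condition (ii) forces $M$ itself to be connected.) You should either adopt the weaker reading explicitly or record the counterexample; as written, your proof is incomplete precisely because the proposition it is trying to prove is not true under the definition you, quite reasonably, adopted.
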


\begin{proof}

Clearly $(K^G,\preceq)$ satisfies all axioms of an AEC with $LST(K^G)=\aleph_0$. 
\end{proof}

\begin{proposition}
The AEC $(K^G,\preceq)$ satisfies the amalgamation property, joint embedding property and no maximal models.\end{proposition}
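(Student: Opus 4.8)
The plan is to verify the three properties by explicit constructions, exploiting the key structural feature of $\preceq$: if $G_{1}\preceq G_{2}$ then every connected component of $G_{2}$ contains a vertex of $G_{1}$ and no two components of $G_{1}$ are merged in $G_{2}$, so $\preceq$ induces a bijection $\pi_{0}(G_{1})\cong\pi_{0}(G_{2})$ under which each component of $G_{1}$ sits inside the corresponding component of $G_{2}$. I will keep this ``component bijection'' in mind throughout. I first dispose of \emph{no maximal models}: given any (nonempty) $M\in K^{G}$, pick a vertex $u\in M$ and let $N$ be $M$ together with one new vertex $v$ and the single edge $uv$. Then $M\subsetneq N$, the only new vertex $v$ lies in the component of $u\in M$ (so no component of $N$ avoids $M$), and no components of $M$ are merged, whence $M\prec N$; thus no model is $\preceq$-maximal.

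For \emph{amalgamation}, suppose $M_{0}\preceq M_{1}$ and $M_{0}\preceq M_{2}$. Each hypothesis gives a component bijection, so every component $C$ of $M_{0}$ determines a component $C_{1}\subseteq M_{1}$ and a component $C_{2}\subseteq M_{2}$ with $C\subseteq C_{1}$ and $C\subseteq C_{2}$. I would take $M_{3}$ to be the \emph{disjoint amalgam} of $M_{1}$ and $M_{2}$ over $M_{0}$: identify the two copies of $M_{0}$ and add no edges between $M_{1}\setminus M_{0}$ and $M_{2}\setminus M_{0}$. Since each $C$ is nonempty, $C_{1}\cup C_{2}$ is connected, and distinct components of $M_{0}$ yield vertex-disjoint, edge-separated pieces; hence the components of $M_{3}$ are exactly the sets $C_{1}\cup C_{2}$. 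The inclusions $M_{1}\hookrightarrow M_{3}$ and $M_{2}\hookrightarrow M_{3}$ are then $\preceq$-embeddings fixing $M_{0}$ (every component of $M_{3}$ meets each factor and no components are merged), and the images meet exactly in $M_{0}$, so this is even a disjoint amalgamation; clearly $M_{3}\in K^{G}$.

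Finally, \emph{joint embedding} is the step I expect to be the main obstacle. The natural candidate is the disjoint union $M_{3}=M_{1}\sqcup M_{2}$ with the two inclusions; this visibly gives graph embeddings and $M_{3}\in K^{G}$, which settles JEP once ``embedding'' is read as an isomorphism onto an induced subgraph. The difficulty is that the component bijection forced by $\preceq$ obstructs the \emph{strong} form: in $M_{1}\sqcup M_{2}$ the components coming from $M_{2}$ contain no vertex of $M_{1}$, so $M_{1}\not\preceq M_{3}$, and one cannot repair this by joining the two graphs because any connection would merge components and again violate $\preceq$. Concretely, $|\pi_{0}(\cdot)|$ is a $\preceq$-invariant, so a common strong extension can exist only when $M_{1}$ and $M_{2}$ have the same number of components; the crux of the proposition is therefore to pin down which notion of embedding is intended and, wherever strong embeddings are required, to respect the connected-component condition. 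I would reduce JEP to the amalgamation construction above by first producing a graph that strongly embeds into both $M_{1}$ and $M_{2}$ and then amalgamating over it, the existence of such a common strong submodel being exactly the point to be checked.
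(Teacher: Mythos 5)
Your treatment of no maximal models and of amalgamation is correct and is essentially the paper's route: the paper's entire proof is the single sentence ``Take the disjoint union of the graphs to be the amalgam,'' i.e.\ the disjoint amalgam over $M_0$, and your verification that the components of that amalgam are exactly the sets $C_1\cup C_2$ (so that the two inclusions are $\preceq$-embeddings and the amalgamation is even disjoint) is precisely the detail the paper omits.

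On the joint embedding property your worry is well founded, and your proposal does not close it --- but neither does the paper. As you observe, $M\preceq N$ forces a bijection between the connected components of $M$ and those of $N$, so the number of connected components is invariant under $\preceq$-extension. Consequently, if ``embedding'' in the JEP is read as $\preceq$-embedding (the standard AEC convention), then a connected graph and a graph with two components admit no joint embedding at all, and your fallback plan --- produce a common strong submodel and amalgamate over it --- cannot succeed in general: such a common strong submodel exists exactly when the two graphs have the same number of components (in which case one can take a transversal of the components as $M_0$, or more simply join corresponding components by a single edge). The paper's one-line argument establishes JEP only under the weaker reading in which $f_1,f_2$ are merely isomorphisms onto induced subgraphs; under that reading your first observation (the plain disjoint union) already suffices and nothing further is needed. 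So the ``point to be checked'' that you defer is not actually checkable: either the weak reading of embedding is intended and you are already done, or the strong reading is intended and the JEP clause of the proposition is false as stated. Your component-bijection analysis is the right tool either way; the missing step is to commit to one reading and say explicitly which conclusion it yields.
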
 
\begin{proof}
Take the disjoint union of the graphs to be the amalgam.
\end{proof}

\begin{remark}
The class of complete graphs along with the relation $\preceq$ restricted to it forms a sub-AEC of $(K^G,\preceq)$.
\end{remark}

\begin{proposition}

The class of graphs $K^G$ is a universal class.\end{proposition}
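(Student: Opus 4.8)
The plan is to recall that a \emph{universal class} is, by definition, a class of structures in a fixed vocabulary that is closed under isomorphisms, closed under substructures, and closed under unions of increasing $\subseteq$-chains (equivalently, under directed unions), the intended strong submodel relation being substructure $\subseteq$ itself. The decisive observation, on which the whole argument rests, is that the two axioms defining a graph,
\[
\forall x,y\,[E(x,y) \implies x \neq y] \quad\text{and}\quad \forall x,y\,[E(x,y) \implies E(y,x)],
\]
are both universal ($\Pi_1$) first-order sentences. Once this is noted, each of the three required closure properties follows from the standard fact that $\Pi_1$ sentences are preserved downward to substructures and carry over to increasing unions, so the proof amounts to three short verifications.

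First I would dispose of closure under isomorphisms, which is immediate: being a graph is a first-order, hence isomorphism-invariant, property, and this is in any case already recorded in axiom~(1) of Definition~\ref{def of Aec} for $(K^G,\preceq)$. Next I would verify closure under substructures. Given $G=(V,E)\in K^G$ and a substructure $H=(W, E\restriction W)$ with $W\subseteq V$, I would check the two axioms directly: for $a,b\in W$, if $H\models E(a,b)$ then $G\models E(a,b)$, so $a\neq b$ and $G\models E(b,a)$, whence $H\models E(b,a)$; thus $H$ is again a graph. This is exactly the downward preservation of universal sentences, so one may alternatively invoke the {\L}o\'{s}--Tarski theorem: an elementary class is closed under substructures if and only if it is axiomatizable by universal sentences, and the defining axioms above are already of this form.

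Finally I would treat closure under unions of increasing chains. Let $\langle G_i : i<\delta\rangle$ be a $\subseteq$-increasing chain in $K^G$ and set $G=\bigcup_{i<\delta}G_i$. Any edge of $G$ already appears in some $G_i$, since both of its endpoints lie in a common $G_i$ by directedness; as $G_i$ is a graph, the relevant irreflexivity and symmetry instances hold there and hence in $G$, so $G\in K^G$.

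I do not expect a genuine obstacle here: the entire content is the syntactic observation that the defining theory is universal, after which the three closure properties are formal. The only care that is needed is to fix precisely the definition of universal class being used and to confirm that the two witnessing axioms are indeed $\Pi_1$; everything else is routine.
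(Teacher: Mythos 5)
Your proof is correct, and it reaches the same three closure properties the paper checks, but by a genuinely different route. The paper's proof is a two-line appeal to the already-established fact that $(K^G,\preceq)$ is an AEC: it cites the AEC axioms for closure under isomorphisms and under increasing unions, and dismisses closure under substructures as clear. You instead verify everything from scratch, with the single organizing observation that the two defining axioms of a graph are universal ($\Pi_1$) sentences, hence preserved downward to substructures and upward to unions of chains (the {\L}o\'{s}--Tarski point of view). Your approach buys something concrete: the AEC axioms only literally give closure under unions of $\preceq$-increasing chains, whereas a universal class must be closed under unions of arbitrary $\subseteq$-increasing chains, and for the paper's $\preceq$ (which tracks connected components) a $\subseteq$-chain need not be a $\preceq$-chain; your direct verification covers the general case without this elision. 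One small imprecision in your chain argument: the reason an edge of $G=\bigcup_i G_i$ lies in some $G_i$ is that the edge relation of the union is by definition the union of the edge relations, not merely that the two endpoints eventually cohabit some $G_i$ (a substructure relation alone would already guarantee the former, but it is the definition of union of structures, not directedness of the index set, that does the work). This does not affect correctness.
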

\begin{proof}
Since $(K^G,\preceq)$ is an aec, $K^G$ is closed under isomorphisms and  closed under $\subseteq$-increasing chains. If $G \in K^G$ and $H \subseteq G$, then clearly $H \in K$.
\end{proof}

\section{Classes of Graphs Contradicting the Amalgamation Property}
It is easy to produce a class of graphs that contradicts the amalgamation property. For example, fix two types, $p,q$ over a graph $G_0$. Let $K$ be the class of graph that do not realize $p,q$ together. They may satisfy one of them but not both. The AEC $(K,\subseteq)$ does not satisfy the amalgamation property.
\par
We give few more examples of class of graphs which does not satify amalgamation property. 

\begin{example}Consider the class of graphs $K$ whose each connected component has no more than $n$ elements. Then, $K$ does not satisfy amalgamation property.
\end{example}

\begin{example}
Let $K$ be the class of graphs $G$ such that if $G$ has at least $k$ connected components then each connected component has at most $n$ elements. The amalgamation property does not hold in this case.
\end{example}

\begin{example}
Fix three graphs: $G_0^*,G_1^*,G_2^*$.  Let $K$ be the class of graphs $G$ such that at least one of the three fixed graphs is not embedded in $G$. Then, the amalgamation property fails.\\ 
We can replace 3 by another number, maybe an infinite number of graphs.\end{example}

The following is an example of class of graphs which is not an AEC.
\begin{example}
The class $K$ of graphs $G$ such that if $G$ has infinite number of connected components then each one has no more than n elements. Then $(K,\subseteq)$ is not an AEC.
\end{example}

\begin{remark} Fix a sequence of graphs: $\langle G^*_\alpha:\alpha<\alpha^* \rangle$.  Let $G_\alpha \implies G_\beta$ be the following sentence: `if $G_\alpha$ is embedded in $G$ then $G_\beta$ is embedded in $G$. In this way, we can define a set of sentences like $G_\alpha \iff G_\beta$, $G_\alpha \vee G_\beta$, $G_\alpha \wedge G_\beta$ etc. Each subset A of this set of `sentences' give rise to the class of graph satisfying them.
\end{remark}

\section{Forb(G)}

\begin{definition}
Let $\mathbf{G}$ be a family of graphs ('forbidden subgraphs'). We define $Forb(\mathbf{G})$ as the class of graphs containing no induced subgraph isomorphic to any element of $\mathbf{G}$.
\end{definition}

\begin{remark}
If we consider $\mathbf{G} \subset \{G:G$ is connected$\}$ then $Forb(\mathbf{G})$ has joint embedding property. But, amalgamation property fails to hold here.
\end{remark}

Assume that the graphs in $\mathbf{G}$ are of cardinality $\lambda$ at least. So the joint embedding and amalgamation property even over sets is satisfied in $K_{<\lambda}$. But there is a problem: the class is not closed under union of increasing sequences if the union has cardinality $\lambda$. If we want to study only models of cardinality less than $\lambda$, then there is no point in forbidding graphs of cardinality $\lambda$ at least. So we should think on a new relation $\preceq$.
\par
\begin{definition}

 For each $G$ in $\mathbf{G}$, we fix a cardinal $\lambda_G$ smaller than $|G|$. Now we define the relation $\preceq$ as follows: $M \preceq N$ where $M$ is an induced subgraph of $N$, they are both in $Forb(\mathbf{G})$ and for each induced subgraph $H$ of $G$ of cardinality $\lambda_G$ at least if $h:H \to M$ is an embedding then we can't strictly extend $h$ to an embedding of some $H'$ between $H$ and $G$ into $N$.
 \end{definition}
The idea is that if we have $\lambda_G$ elements of $G$ then it is forbidden to add elements of $G$. In this way, we avoid the case where in the union of an increasing sequence of graphs we will have a copy of $G$.
\par
\begin{proposition}
$Forb(\mathbf{G})$ with the relation $\preceq$ is closed under union of increasing sequence of graphs.
\end{proposition}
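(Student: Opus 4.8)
The plan is to argue by contradiction. Suppose $\langle M_\alpha : \alpha<\delta\rangle$ is a $\preceq$-increasing sequence in $Forb(\mathbf{G})$ whose union $M=\bigcup_{\alpha<\delta}M_\alpha$ is \emph{not} in $Forb(\mathbf{G})$. A union of an increasing chain of graphs is again a graph, so the only way membership can fail is that $M$ contains an induced copy of some $G\in\mathbf{G}$; fix such a $G$ together with an induced embedding $g:G\to M$, and write $A=g[G]$. For $\alpha<\delta$ set $B_\alpha=A\cap M_\alpha$. Since each $M_\alpha$ is an induced subgraph of $M$, each $B_\alpha$ corresponds under $g^{-1}$ to an induced subgraph of $G$, the family $\langle B_\alpha\rangle$ is increasing in $\alpha$, and $\bigcup_{\alpha<\delta}B_\alpha=A$. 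The strategy is to find a stage $\alpha$ at which $B_\alpha$ already contains $\lambda_G$ vertices while some vertex of $A$ appears only at a later stage; such a configuration contradicts the defining clause of $\preceq$.

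The heart of the matter is the following accumulation claim: there is $\alpha<\delta$ with $|B_\alpha|\geq\lambda_G$. I would prove this by contradiction, assuming $|B_\alpha|<\lambda_G$ for all $\alpha$. For each $a\in A$ let $e(a)<\delta$ be the least stage with $a\in M_{e(a)}$, and let $D=\{e(a):a\in A\}$ be the set of entry stages; at most $|B_\beta|<\lambda_G$ vertices have $e(a)=\beta$. If $|D|\leq\lambda_G$ (and $\lambda_G$ infinite), then $|A|\leq|D|\cdot\lambda_G\leq\lambda_G$, contradicting $|A|=|G|>\lambda_G$ (when $\lambda_G$ is finite the chain has already stabilized by the largest of the finitely many entry stages, giving $|A|<\lambda_G$ directly). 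If instead $|D|>\lambda_G$, take the first $\lambda_G$ entry stages $\beta_i$ ($i<\lambda_G$) in increasing order and set $\beta^*=\sup_{i<\lambda_G}\beta_i$; since entry stages lie beyond them we have $\beta^*<\delta$, and by monotonicity $B_{\beta^*}\supseteq\bigcup_{i<\lambda_G}B_{\beta_i}$ contains the $\lambda_G$ distinct vertices entering at the $\beta_i$, so $|B_{\beta^*}|\geq\lambda_G$, again a contradiction. This establishes the claim, and I note that only monotonicity of the chain is used, not continuity.

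With the claim in hand, fix $\alpha<\delta$ least with $|B_\alpha|\geq\lambda_G$. If $A\subseteq M_\alpha$, then $A$ is an induced copy of $G$ inside $M_\alpha\in Forb(\mathbf{G})$, which is impossible; hence there is a vertex $a\in A\setminus M_\alpha$, and $a\in M_\gamma$ for some $\gamma$ with $\alpha<\gamma<\delta$. Put $H=g^{-1}[B_\alpha]$ and $H'=H\cup\{g^{-1}(a)\}$, so that $H$ is an induced subgraph of $G$ with $|H|\geq\lambda_G$ and $H\subsetneq H'\subseteq G$. Then $h=g\restriction H$ is an embedding of $H$ into $M_\alpha$, while $h'=g\restriction H'$ is an embedding of $H'$ into $M_\gamma$ strictly extending $h$. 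Since the sequence is $\preceq$-increasing we have $M_\alpha\preceq M_\gamma$, and the existence of $h'$ contradicts the defining clause of $\preceq$. Therefore $M$ contains no induced copy of any member of $\mathbf{G}$, i.e. $M\in Forb(\mathbf{G})$. I expect the accumulation claim to be the only genuine obstacle: the cardinal bookkeeping must be arranged to cover $\lambda_G$ of every cofinality, in particular singular $\lambda_G$, whereas the final contradiction with $\preceq$ is a routine unwinding of the definitions.
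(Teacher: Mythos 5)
Your proof is correct and follows essentially the same route as the paper's: pull back the offending copy of $G$ to the stages $G_\alpha$ (your $B_\alpha$), show some stage must already contain $\lambda_G$ vertices of the copy, and then use a later-entering vertex to produce a strict extension of an embedding of a $\geq\lambda_G$-sized induced subgraph, contradicting $M_\alpha\preceq M_\gamma$. Your version merely fills in the cardinality bookkeeping (including singular and finite $\lambda_G$) that the paper's Case 1/Case 2 dichotomy leaves implicit.
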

\begin{proof}

  Let $\langle M_\alpha:\alpha<\delta \rangle$ be an increasing sequence of graphs in $Forb(\mathbf{G})$ and let $M_\delta$ be its union. Assume for the sake of contradiction that $M_\delta \notin Forb(\mathbf{G})$, namely for some $G \in \mathbf{G}$ there is an embedding $h:G \to M_\delta$. For each $v \in G$ let $\alpha(v)$ be the first ordinal $\alpha$ such that $h(v) \in M_\alpha$.\\
Case 1: For each $\alpha<\delta$ the set $G_\alpha:=\{v \in G:\alpha(v) \leq \alpha\}$ has cardinality less than $\lambda_G$. In this case, the sequence $\langle G_\alpha:\alpha<\delta \rangle$ is an increasing sequence of sets of cardinality less than $\lambda_G$ whose union has size bigger than $\lambda_G$, which is impossible.
\\
Case 2: For some $\alpha$, the set $G_\alpha$ is big. In this case, $h$ must be its restriction to $M_\alpha$. So $G$ is embedded in $M_\alpha$ and $M_\alpha$ is not in $Forb(\mathbf{G})$, a contradiction.
\end{proof}
\par
\begin{proposition}
The pair $(Forb(\mathbf{G}),\preceq)$ does not satisfy smoothness.
\end{proposition}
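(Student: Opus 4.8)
The plan is to refute smoothness by exhibiting an explicit counterexample: a family $\mathbf{G}$, a choice of the cardinals $\lambda_G$, a $\preceq$-increasing continuous sequence $\langle M_n : n<\omega \rangle$, and a graph $N$ with $M_n \preceq N$ for every $n$, yet $\bigcup_n M_n \not\preceq N$. The mechanism I want to exploit is that the defining clause of $\preceq$ quantifies over embeddings of induced subgraphs $H \subseteq G$ of cardinality $\geq \lambda_G$; if the copy of such an $H$ that appears in the union $M_\omega := \bigcup_n M_n$ is \emph{spread out} along the chain (its image meets every $M_n$ but is contained in none), then the ``no strict extension'' requirement, checked and satisfied at each finite stage, never actually sees $H$, while the union does. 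This is exactly the gap between $M_\alpha \preceq N$ and $M_\delta \preceq N$ that smoothness would close. Note that $\lambda_G$ must be infinite for this to work: were $\lambda_G$ finite, the image of any $H$ with $|H| \geq \lambda_G$ would be a finite subset of $M_\omega$, hence already contained in some single $M_n$, and then $M_n \preceq N$ would already forbid its extension.

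Concretely, I would take $\mathbf{G} = \{G\}$ with $G$ the edgeless graph on $\aleph_1$ vertices and set $\lambda_G = \aleph_0 < \aleph_1 = |G|$, so that the induced subgraphs of $G$ of cardinality $\geq \lambda_G$ are precisely the countably infinite independent sets. Fix distinct, pairwise non-adjacent vertices $x_0,x_1,x_2,\dots$ together with one extra non-adjacent vertex $y$, and put $M_n = \{x_0,\dots,x_n\}$ (an independent set of size $n+1$), $M_\omega = \bigcup_n M_n = \{x_i : i<\omega\}$, and $N = M_\omega \cup \{y\}$. Each of these is countable and edgeless, hence contains no independent set of size $\aleph_1$, so all of them lie in $Forb(\mathbf{G})$.

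The verification then splits into three checks, all routine. First, each $M_n$ is finite, so it carries \emph{no} induced copy of any $H \subseteq G$ with $|H| \geq \aleph_0$; thus the extension clause in the definition of $\preceq$ is satisfied vacuously, yielding both $M_n \preceq M_{n+1}$ and $M_n \preceq N$ (the induced-substructure requirement being immediate), while continuity is vacuous since $\omega$ has no limit points below it. Second, $M_\omega \not\preceq N$: the countable independent set $H := \{x_i : i<\omega\}$ is an induced subgraph of $G$ with $|H| = \aleph_0 \geq \lambda_G$, the inclusion $h : H \to M_\omega$ is an embedding, and $h$ extends strictly to $h' : H \cup \{y\} \to N$ by sending $y$ to itself; since $H \subsetneq H \cup \{y\} \subseteq G$ and the image is still independent, $h'$ is a genuine embedding witnessing that $h$ \emph{can} be strictly extended, contradicting the defining clause. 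Hence $\langle M_n : n<\omega\rangle$ is $\preceq$-increasing and continuous with $M_n \preceq N$ for all $n$, while its union fails to be $\preceq N$, so smoothness fails.

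The only real obstacle is conceptual rather than computational: one must choose $\lambda_G$ infinite and arrange each $M_n$ to omit all sufficiently large induced subgraphs of $G$, so that the critical embedding materializes only in the limit. Once the counterexample is organized this way, every clause to be checked is either obvious or vacuous, and no genuine calculation remains.
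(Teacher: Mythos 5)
Your proposal is correct and uses the same mechanism as the paper's proof: a $\preceq$-increasing chain of induced pieces of $G$, each of cardinality below $\lambda_G$ so that the no-extension clause holds vacuously at every stage, whose union is a $\lambda_G$-sized copy of an induced subgraph of $G$ that the target $N$ then strictly extends inside $G$. The only substantive difference is generality: the paper runs this construction inside an arbitrary $G$ with $|G|>\aleph_0$ and $\lambda_G \geq \aleph_0$ (filtering a fixed $G^- \subseteq G$ of size $\lambda_G$ and taking $G^- \subset N \subset G$), whereas you fix one concrete instance ($\mathbf{G}=\{G\}$ with $G$ edgeless on $\aleph_1$ vertices, $\lambda_G=\aleph_0$, $\delta=\omega$), so strictly speaking you establish the failure only for that particular choice of $\mathbf{G}$ rather than under the paper's general hypotheses. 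In compensation, your concrete choice makes explicit a point the paper leaves unverified, namely that $N$ and $M_\delta$ actually belong to $Forb(\mathbf{G})$ (automatic for your countable edgeless graphs, but needing an argument for a general proper induced subgraph $N$ of $G$), and your opening observation that the argument genuinely requires $\lambda_G$ to be infinite matches the paper's standing assumption and explains why it is there.
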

\begin{proof}
Assume that $|G|>\aleph_0,\lambda_G \geq \aleph_0$. Fix $G^- \subseteq G$ with $|G^-|=\lambda_G$. Enumerate $G^-$ as $\{V_\alpha:\alpha<\lambda_G\}$. Let $M_\alpha$ be the induced subgraph of $G^-$ whose universe is $\{V_\beta:\beta<\alpha\}$. So, $\langle M_\alpha:\alpha<\delta \rangle \}$ is increasing(since $||M_\alpha||<\lambda_G$ so, every induced subgraph $G'$ of $G$ is embedded in $M_\alpha$). Let $N$ will be such that $G^- \subset N \subset G$.Then, $M_\delta=\bigcup\limits_{\alpha<\delta}M_\alpha=G^-\npreceq N$.
\end{proof}
\par 

So we have to return to $(Forb(\mathbf{G}),\subseteq)$ (namely to the relation of being an induced subgraph) where each $G$ in $F$ is finite.

We try to find $G$ that admits an AEC with JEP and LST-number less than $|G|$. It means that there is a relation $\preceq$ on $Forb(G)$ such that $(Forb(G),\preceq)$ is an aec with JEP and its LST-number is less than $|G|$. 

\bd
Fix two graphs $G$ and $M$. We define \emph{the number of elements of $G$ in $M$} as:
\[
\sup\{|h|:h \text{ is an embedding of an induced subgraph of } G \text{ into } M\}.
\]
\ed

\begin{example}
Let $G$ be the graph with $\lambda^+$ vertices and no edges. Let $\preceq$ be define as follows: $M \preceq N$ iff $M \subseteq N$ and the number of elements of $G$ in $N$ equals the number of elements of $G$ in $M$. In this case, the relation is not closed under increasing sequences. Indeed, Let $\langle M_\alpha:\alpha<\lambda^+ \rangle$ be a filtration of $G$ such that $||M_\alpha||=\lambda$ for each $\alpha<\lambda^+$. Each $M_\alpha$ has $\lambda$ vertices and no edges. Clearly, $M_\alpha \preceq M_{\alpha+1}$ for each $\alpha<\lambda^+$. But the union $\bigcup_{\alpha<\lambda^+}M_\alpha$ is $G$ which is not in $Forb(G)$. 
\end{example}

\begin{example}
Let $G$ be the graph with $\lambda^+$ vertices and no edges and let $\kappa$ be an infinite cardinal which is $\leq \lambda^+$. Let $\preceq$ be define as follows: $M \preceq N$ iff $M \subseteq N$ and $N$ does not add an element of $G$ over $\kappa$ elements of $M$. 

We show that $(Forb(G),\preceq)$ does not satisfy smoothness. Let $M$ be a graph with exactly  $\kappa$ many vertices and no edges. Let $\langle M_\alpha:\alpha < \kappa \rangle$ be a filtration of $M$. Let $N$ be the graph obtained from $M$ by adding an element $b$ but no edges. For each $\alpha<\kappa$ we have $M_\alpha \preceq N$ because $M_\alpha$ has less than $\kappa$ elements. But $M \npreceq N$.  
\end{example}

\begin{example}\label{example no edges not add an element}
Let $G$ be the graph with $\lambda^+$ vertices and no edges and let $1 \leq n<\omega$. Let $\preceq$ be define as follows: $M \preceq N$ iff $M \subseteq N$ and $N$ does not add an element of $G$ over $n$ elements of $M$. 

Then $(Forb(G),\preceq)$ has JEP(Let $M$ and $N$ be the graphs for which we want to find a joint embedding. It is enough to take the disjoint union of $M$ and $N$ and connect each vertex of $M$ with every vertices in $N$ by an edge). 
We prove that there is no $LST$-number.

Let $N$ be the following graph: its universe (set of vertices), $V_N$, is some infinite cardinal $\mu$. Its set of edges is:
\[
E_N=\{(\alpha,\beta):n\leq \alpha,\beta<\mu \text{ and } \alpha+n\leq \beta \vee \beta+n\leq \alpha\}
\]
Let $A=\{0,1,\dots n-1\}$. Let $M \in Forb(G)$ such that $A \subseteq M \preceq N$.

$N$ belongs to $Forb(G)$. Moreover, every induced subgraph $N'$ of $N$ of cardinality $2n+1$ at least has an edge, so cannot be embedded in $G$. Take a subset $B$ of $|N'|$ of cardinality $n+1$ at least whose all elements are not in $\{0,1,\dots n-1\}$. We can find two elements $\alpha,\beta \in B$ so that $\alpha+n<\beta$, so $(\alpha,\beta)$ is an edge in $N'$.

We now show that $M=N$. We argue by contradiction. Assume that there exists $x \in N \setminus M$. So the induced subgraph of $N$, $N[A \cup \{x\}]$ has no edges. So we find an embedding $h:N[A \cup \{x\}] \to G$. Hence, $N$ adds an element of $G$ over $M$, a contradiction.

\end{example}

The following example generalizes Example \ref{example no edges not add an element}.

\begin{example}\label{example not add an element}
Let $G$ be the graph with $\lambda^+$ vertices and let $1 \leq n<\omega$ be such that for each two finite induced subgraphs $G_1=(V_1,E_1)$ and $G_2=(V_2,E_2)$ of $G$ of cardinality $n$ at most, there is a graph $G_3=(V_3,E_3)$ such that the following hold:
\be
\item
 $V_3=V_1 \cup V_2$;
\item for each $v \in V_1$, the induced subgraph  $V_2 \cup \{v\}$ of $G_3$ is not embedded in $G$ and
\item for each $v \in V_2$, the induced subgraph $V_1 \cup \{v\}$ of $G_3$ is not embedded in $G$.
\ee 
Let $\preceq$ be define as follows: $M \preceq N$ iff $M \subseteq N$ and $N$ does not add an element of $G$ over $n$ elements of $M$. 

Then $(Forb(G),\preceq)$ has JEP(take the disjoint union). 

\end{example}

\begin{remark}\label{remark_for_next}
Let $G$ be a graph and let $n<\omega$. If for each induced subgraph $A$ of $G$ of cardinality $n$, every two elements $x,y \in G \setminus A$ we have $tp_{L_{\omega,\omega}}(x/A;G)=tp_{L_{\omega,\omega}}(y/A;G)$ then $G$ is complete or has no vertices. 
\end{remark}

\begin{theorem}
Let $n<\omega$ and $G$ be an infinite graph. Let $\preceq_n$ be the following partial order on $Forb(G)$: $M \preceq_n N$ when: for each induced subgraph $A$ of $M$ and each $x \in N$ if $tp_{L_{\omega,\omega}}(x/A;N)$ is realized in $G$ then $x \in M$.
Then $(Forb(G),\preceq_n)$ has no LST-number.
\end{theorem}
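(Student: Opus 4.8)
The plan is to unwind the definition of the LST-number and show it fails at every cardinal. Recall that a cardinal $\lambda$ is an LST-number only if for every $N \in Forb(G)$ and every $A \subseteq N$ there is $M$ with $A \subseteq M \preceq_n N$ and $\|M\| \leq \lambda + |A|$. To prove that no such $\lambda$ exists, I would fix an arbitrary cardinal $\lambda$ and manufacture a \emph{witness}: a graph $N \in Forb(G)$ with $|N| > \lambda$ together with a finite set $A$ of size $n$, such that the only $M$ satisfying $A \subseteq M \preceq_n N$ is $N$ itself. Since then every admissible $M$ has $\|M\| = |N| > \lambda = \lambda + |A|$, the LST-property fails at $\lambda$, and as $\lambda$ was arbitrary there is no LST-number. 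The reduction that makes this tractable is the following sufficient condition: if $|A| = n$ and \emph{every} $x \in N \setminus A$ realizes over $A$ (computed in $N$) a type that is realized in $G$, then any $M$ with $A \subseteq M \preceq_n N$ must equal $N$. Indeed, given such $M$ and any $x \in N \setminus M$, we have $x \notin A$ (as $A \subseteq M$), so $tp_{L_{\omega,\omega}}(x/A;N)$ is realized in $G$; since $A$ is an induced subgraph of $M$, the defining clause of $\preceq_n$ forces $x \in M$, a contradiction.

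The construction of the witness should generalize the explicit graph of Example \ref{example no edges not add an element}. Fixing an infinite $\mu > \lambda$, I would use the infinitude of $G$ to select a finite induced $n$-set $A$ and a $1$-type $p$ over $A$ that is realized in $G$, then let $N$ consist of $A$ together with $\mu$ further vertices, each attached to $A$ so as to realize $p$, and with the edges \emph{among} the new vertices chosen to accomplish two things at once: first, to leave each new vertex's type over $A$ equal to $p$ (so that, by the reduction above, it is realized in $G$), and second, to force $N \in Forb(G)$. For the edgeless $G$ of Example \ref{example no edges not add an element} this is exactly the pattern $E_N = \{(\alpha,\beta) : \alpha + n \leq \beta \vee \beta + n \leq \alpha\}$, where $A$ is the block of $n$ isolated vertices and every later vertex is non-adjacent to $A$, while every induced subgraph of size $\geq 2n+1$ acquires an edge and hence cannot embed into the edgeless $G$. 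For a general infinite $G$ the auxiliary edge pattern among the $\mu$ new vertices must be engineered so that every sufficiently large induced subgraph of $N$ exhibits a finite configuration absent from $G$, which guarantees that $G$ is not induced in $N$.

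The hard part is precisely this simultaneous demand on the edges among the new vertices: realizing the chosen $G$-type unboundedly often pushes $N$ to resemble $G$, while omitting an induced copy of $G$ pushes it away, and reconciling the two is where the structure of $G$ genuinely enters. Note that the requirement is strong, since for the full $L_{\omega,\omega}$ type the sentence-fragment of any realized type already forces $N \equiv G$; thus the construction must in effect produce arbitrarily large models of $Th(G)$ that nonetheless omit $G$ as an induced subgraph, and the new vertices must realize an honest $G$-type rather than merely an embeddable finite pattern. The infinitude of $G$ is what supplies the necessary room: by Remark \ref{remark_for_next}, unless $G$ is complete or empty it fails to be $n$-type-homogeneous, so over some induced $n$-set there are at least two distinct external types to exploit, and one of them can be realized $\mu$ times without closing up into a copy of $G$. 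I expect to carry out the final step by cases on the local combinatorics of $G$ (whether $G$ carries arbitrarily large cliques or independent sets, and how its finite induced subgraphs sit inside it), in each case writing down an explicit edge rule on $\mu$ that generalizes the displacement condition $|\alpha - \beta| \geq n$ used in Example \ref{example no edges not add an element}.
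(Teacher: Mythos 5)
Your overall strategy coincides with the paper's: fix an arbitrary $\lambda$, produce $N \in Forb(G)$ of size greater than $\lambda$ together with an $n$-element induced subgraph $A$ such that every $x \in N \setminus A$ realizes over $A$ a type realized in $G$, and conclude that the only $M$ with $A \subseteq M \preceq_n N$ is $N$ itself. That reduction is exactly the paper's closing step and is correct. The gap is that you never actually construct $N$ for a general infinite $G$: the last part of your write-up is an explicit admission that the edge rule among the $\mu$ new vertices --- the one that must simultaneously keep each new vertex's type over $A$ realized in $G$ and keep $G$ from embedding into $N$ --- remains to be produced by an unspecified case analysis on the combinatorics of $G$. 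That construction is the entire content of the theorem beyond Example \ref{example no edges not add an element}, so what you have is a proof scheme rather than a proof. You also leave the case of $G$ complete or edgeless unhandled: your appeal to Remark \ref{remark_for_next} only supplies two distinct types over some induced $n$-set when $G$ is neither, so that case needs a separate argument (it is the paper's Case A).

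The paper closes the gap you leave open by a move you did not find: in the case where $G$ is neither complete nor edgeless, it fixes $A \subseteq G$ of size $n$ and \emph{two} distinct types $p,q$ over $A$ realized in $G$, and builds $N$ from $A$ plus arbitrarily many new vertices each realizing $p$ over $A$. Then $N \in Forb(G)$ not because of a delicate global edge pattern but simply because $q$ is omitted in $N$, so $G$ cannot embed; and $M = N$ follows as in your reduction. In particular the ``simultaneous demand'' you single out as the hard part largely dissolves once the type is read, as the paper implicitly does, as the adjacency (atomic) type over the finite set $A$: a $1$-type over $A$ then records only adjacencies to $A$, so the edges among the new vertices are unconstrained by the realization requirement and can be chosen freely. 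Your observation that the full elementary $L_{\omega,\omega}$ type would force $N \equiv G$ is a legitimate criticism of the theorem as literally stated, but it is an objection to the statement, not a substitute for the missing construction; under the reading that makes the theorem provable, the witness is short to describe and you have not supplied it.
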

\begin{proof}
\emph{Case A:} $G$ is complete or has no edges. By the Remark \ref{remark_for_next}.\\
\emph{Case b:} $G$ is not complete, but has at least one edge. We can find an induced subgraph $A$ of $G$ of size $n$ and two distinct first order types $p,q$ over $A$ that are realized in $G$. It is easy to construct a graph $N$ such that $A$ is an induced subgraph of $N$ and the types of each $x \in N \setminus A$ is $p$. Since $q$ is not realized in $N$, there is no embedding of $G$ in $N$, so $N \in Forb(G)$.
\par
We prove that if $A \subseteq M$ and $M \preceq_n N$ then $M=N$. Let $x \in N$. Since $tp_{L_{\omega,\omega}}(x/A;N)=p$ is realized in $G$, by the definition of $\preceq_n$, we have $x \in M$.
\end{proof}
\par
These findings raise the possibility that there isn't an aec in this particular situation.

\section{Versions of Forb-con(G)}

In this section, we try to define two different relations on the class of graphs of $Forb-con(G)$ and check whether they form an aec.
\bd
 Let $G$ be a connected graph. Define $Forb-con(G)$ as the class of graphs $M$ such that each connected component of $M$ is not isomorphic to $G$.
We define $M \preceq N$ as $M \subseteq N$ and for each  complete graph $C$ that is embedded in $G$ if $C \cap M \neq \emptyset$ then $C \cap N \subseteq M$.
\ed

\begin{proposition}\label{proposition forb-con is an aec}
The pair $(Forb-con(G),\preceq)$ satisfies JEP.
\end{proposition}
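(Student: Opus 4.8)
The plan is to use the disjoint union, exactly as in the amalgamation argument for $(K^G,\preceq)$ given earlier. Given two graphs $M_1,M_2 \in Forb\text{-}con(G)$ (both of cardinality $\lambda$, since we are checking JEP inside $K_\lambda$), I would set $M_3 := M_1 \sqcup M_2$, the graph whose vertex set is the disjoint union of $V(M_1)$ and $V(M_2)$ and whose edge set consists precisely of the edges of $M_1$ together with those of $M_2$, with no edges running between the two parts. I would then take $f_1,f_2$ to be the two canonical inclusions $M_n \hookrightarrow M_3$.

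The first thing to verify is that $M_3 \in Forb\text{-}con(G)$. The key structural fact is that, since there are no edges across the two parts, every connected component of $M_3$ is a connected component either of $M_1$ or of $M_2$. As no component of $M_1$ or $M_2$ is isomorphic to $G$ by hypothesis, the same holds for $M_3$; hence $M_3 \in Forb\text{-}con(G)$. Moreover $\|M_3\| = \lambda$ because $\lambda$ is infinite, so $M_3 \in K_\lambda$ as required by the definition of JEP.

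The substantive point is to check that the two inclusions are $\preceq$-embeddings, i.e.\ that $M_1 \preceq M_3$ and $M_2 \preceq M_3$, so that $f_1,f_2$ qualify as (strong) embeddings. Fix a complete subgraph $C$ of $M_3$ that embeds into $G$ and suppose $C \cap M_1 \neq \emptyset$; I must show $C \cap M_3 = C \subseteq M_1$. The observation that does all the work is that a complete graph is connected, so $C$ lies inside a single connected component of $M_3$; by the previous paragraph that component is contained entirely in $M_1$ or entirely in $M_2$, and since $C$ meets $M_1$ it must sit inside $M_1$. This yields $M_1 \preceq M_3$, and the argument for $M_2 \preceq M_3$ is symmetric.

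I expect no genuine obstacle here: the only place where the construction could fail is the clique condition defining $\preceq$, and it is dispatched entirely by the remark that a complete, hence connected, subgraph of a disjoint union cannot straddle the two summands. The one thing to keep an eye on is the reading of the phrase "$C$ embedded in $G$" in the definition of $\preceq$ — whether $C$ is required to be an actual subgraph of $N=M_3$ or merely isomorphic to a clique occurring in $G$ — but under either reading the connectedness argument applies verbatim, so the disjoint union witnesses JEP in both cases.
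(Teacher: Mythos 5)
Your proposal is correct and follows exactly the paper's approach: the paper's entire proof is ``take disjoint union,'' and you have simply supplied the routine verification (components of the disjoint union come from one summand, and a complete subgraph, being connected, cannot straddle the two parts). Nothing further is needed.
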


\begin{proof}
Take disjoint union.
\end{proof}

\begin{proposition}
$(Forb-con(G),\preceq)$ satisfies transitivity.
\end{proposition}

\begin{proof}
  Assume that $M_0 \preceq M_1 \preceq M_2$. We have to show that $M_0 \preceq M_2$. Let $C$ be a complete graph that is embedded in $G$. Assume that $C \cap M_0 \neq \emptyset$. So $C \cap M_1 \neq \emptyset$. Since $M_0 \preceq M_1 \preceq M_2$, we have $C \cap M_1 \subseteq M_0$ and $C \cap M_2 \subseteq M_1$. Hence, $C \cap M_2 \subseteq M_0$.
\end{proof}

\begin{proposition}
The pair $(Forb-con(G),\preceq)$ satisfies the following strong version of coherence: If $M_0 \subseteq M_1 \subseteq M_2$ and $M_0 \preceq M_2$ then $M_0 \preceq M_1$.
\end{proposition}

\begin{proof}
Let $C$ be a complete graph that is embedded in $G$ such that $C \cap M_0 \neq \emptyset$. Then $C \cap M_2 \subseteq M_0$ and so $C \cap M_1 \subseteq M_0$.
\end{proof}

\begin{proposition}
$(Forb-con(G),\preceq)$ satisfies Axiom (3)(a) of aecs.
\end{proposition}

\begin{proof}
We should prove that $M_0 \preceq M_\delta$. Let $C$ be a complete graph that is embedded in $G$ and assume that $C \cap M_0 \neq \emptyset$. So $C \cap M_\alpha \subseteq M_0$ for each $\alpha<\delta$. Therefore $C \cap M_\delta \subseteq M_0$.
\end{proof}

\begin{proposition}
$(Forb-con(G),\preceq)$ satisfies smoothness.
\end{proposition}

\begin{proof}
We have to prove that $M_\delta \preceq N$. Let $C$ be a complete graph that is embedded in $G$ such that $C \cap M_\delta \neq \emptyset$. So for some $\alpha<\delta$, we have $C \cap M_\alpha \neq \emptyset$. Since $M_\alpha \preceq N$, we have $C \cap N \subseteq M_\alpha \subseteq M_\delta$.
\end{proof}

Now we define another version of relation for $Forb-con(G)$.

\bd
We define $Forb-con(G)$ as the class of graphs $M$ such that each connected component of $M$ is not isomorphic to $G$. We define $M \preceq N$ iff $M \subseteq N$ and every connected component $C$ of $M$ which is embedded in $G$ is a connected component of $N$. 
\ed

\begin{proposition}
$(Forb-con(G),\preceq)$ satisfies transitivity.
\end{proposition}

\begin{proof}
  Assume that $M_0 \preceq M_1 \preceq M_2$. We have to show that $M_0 \preceq M_2$. Let $C$ be a connected component of $M_0$ that is embedded in $G$. Then $C$ is a connected component of $M_1$ and so a connected component of $M_2$. 
\end{proof}

\begin{proposition}
The pair $(Forb-con(G),\preceq)$ satisfies the following strong version of coherence: If $M_0 \subseteq M_1 \subseteq M_2$ and $M_0 \preceq M_2$ then $M_0 \preceq M_1$.
\end{proposition}

\begin{proof}
Let $C$ be a connected component of $M_0$ that is embedded in $G$. Let $x \in M_1 \setminus M_0$. If $x$ is adjoint to some element in $C$ then $C$ is not a connected component of $M_2$, in a contradiction the assumption $M_0 \preceq M_2$. So $C$ is a connected component of $M_1$.
\end{proof}

\begin{proposition}
$(Forb-con(G),\preceq)$ satisfies Axiom (3)(a) of aecs.
\end{proposition}
\begin{proof}
We should prove that $M_0 \preceq M_\delta$. Let $C$ be a connected component of $M_0$ that is embedded in $G$. Toward a contradiction assume that some $x \in M_\delta \setminus M_0$ is adjoined to some element in $C$. Fix $\alpha<\delta$ so that $x \in M_\alpha$. Since $M_0 \preceq M_1$, $C$ is a connected component of $M_\alpha$, a contradiction.
\end{proof}

\begin{proposition}
$(Forb-con(G),\preceq)$ satisfies smoothness.
\end{proposition}

\begin{proof}
We have to prove that $M_\delta \preceq N$. Let $C$ be a connected component of $M_\delta$ that is embedded in $G$. We shall prove that $C$ is a connected component of $N$. Let $\alpha<\delta$ be an element such that $C \cap M_\alpha \neq \emptyset$. Since $C \cap M_\alpha$ is a connected component of $M_\alpha$ that is embedded in $G$ and $M_\alpha \preceq N$, the set $C \cap M_\alpha$ is a connected component of $N$. Hence, $C$ is included in $M_\alpha$ and $C=C \cap M_\alpha$ is a connected component of $N$. 
\end{proof}
\par
In both $Forb-con(G)$ scenarios, we are unable to handle the LST number. A few challenges surfaced, which might be covered in later works.

\section{Acknowledgement}
I want to sincerely thank Dr. Adi Jarden for all of his assistance during this research initiative, including his mentorship, support, and guidance. His knowledge, perceptions, and helpful criticism have greatly influenced the course and caliber of this study.

\bibliography{Bibliography.bib}

\end{document}